\documentclass[12pt]{article}
\usepackage{amsmath,amssymb,amsfonts,amsthm,epsfig,url,xspace}

\setlength{\textwidth}{6.5 in}
\setlength{\textheight}{8.25in}
\setlength{\oddsidemargin}{0in}
\setlength{\topmargin}{0in}
\addtolength{\textheight}{.8in}
\addtolength{\voffset}{-.5in}
\newtheorem{theorem}{Theorem}

\newtheorem{lemma}[theorem]{Lemma}

\newtheorem{corollary}[theorem]{Corollary}
\theoremstyle{definition}
\theoremstyle{definition}

\newcommand{\R}{\mathbb{R}}

\newcommand{\BP}{\text{BP}}
\newcommand{\eps}{\epsilon}
\newcommand{\ep}{\varepsilon}
\newcommand{\old}[1]{}

\newcommand{\pati}{\frac{\partial}{\partial t_i}}
\newcommand{\pa}{\partial}

\title{Branched Polymers}
\author{Richard Kenyon\thanks{Mathematics Department, Brown University, Providence RI 02912, USA; richard.kenyon@brown.edu. 
Research supported by NSERC. Research of both authors
began at a workshop of the Aspen Institute for Physics.}
and Peter Winkler
\thanks{Department of Mathematics, Dartmouth,
Hanover NH 03755-3551, USA; peter.winkler@dartmouth.edu.  Research
supported by NSF grant DMS-0600876.}}
\date{}
\begin{document}

\maketitle

\begin{abstract}
Building on and from the work of Brydges and Imbrie, we give an elementary
calculation of the volume of the space of branched polymers of order
$n$ in the plane and in 3-space.  Our development reveals some more
general identities, and allows exact random sampling. In particular
we show that a random $3$-dimensional branched polymer
of order $n$ has diameter of order $\sqrt{n}$.
\end{abstract}

\section{Introduction}

A {\bf branched polymer of order $n$} in $\R^d$---or just ``polymer'' for short---is
a connected set of $n$ labeled unit spheres with nonoverlapping interiors.
We will assume that the sphere labeled $1$ is centered at the origin.
See~Figure \ref{fig:bp1} for an example in the plane.

Intended as a model in chemistry or biology, branched polymers are often modeled,
in turn, by lattice animals (trees on a grid); see, e.g., \cite{Bu,F,KS,Lu,Va,Vu}.
However, we will see that continuum polymers turn out to be in some respects more tractable.

The set of polymers can be parametrized locally by the spherical angles
of the vectors connecting adjacent sphere centers.  In these coordinates
Brydges and Imbrie \cite{BI} showed that the space $B^d(n)$ of
polymers of order $n$ has total volume $(n{-}1)!(2\pi)^{n-1}$ for $d=2$ and
$n^{n-1}(2\pi)^{n-1}$ for $d=3$.  Their proof uses nonconstructive
techniques such as equivariant cohomology and localization.

We give here an elementary proof, together with some generalizations and
an algorithm for exact random sampling of polymers.
In the planar case our algorithm has the added feature of being
inductive, in the sense that a uniformly random polymer of order
$n$ is constructed from one of order $n{-}1$.

Although not explicit in their paper, the proof in \cite{BI} in fact shows 
that in the planar case the volume of the configuration space is unchanged
when the radii of the individual disks are different. We use this fact in an
essential way in our constructions.

\begin{figure}[htbp]
\epsfxsize220pt
$$\epsfbox{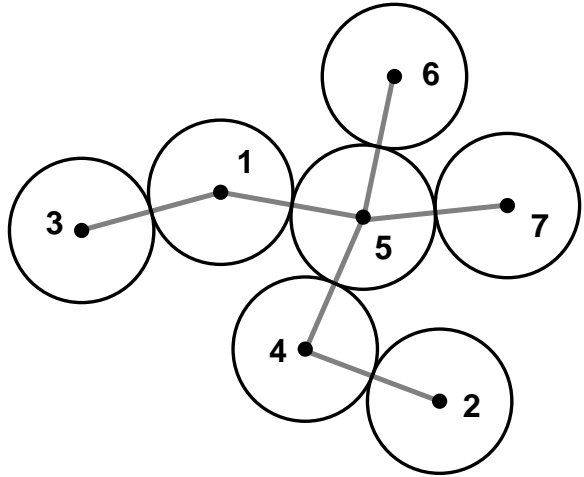}$$
\caption{A branched polymer in the plane}\label{fig:bp1}
\end{figure}

\section{The Planar Case}

Let us observe first that $(n{-}1)!(2\pi)^{n-1}$ is also the volume of the space
of ``crossing worms''---that is, strings of labeled touching disks, beginning
with disk 1 centered at the origin, but now with no constraint that disks
may not overlap.  See Figure~\ref{fig:bp2} below for an example.  Fixing the
order of disks 2 through $n$ in the crossing worm yields an ordinary unit-step
walk in the plane of $n{-}1$ steps.

\begin{figure}[htbp]
\epsfxsize220pt
$$\epsfbox{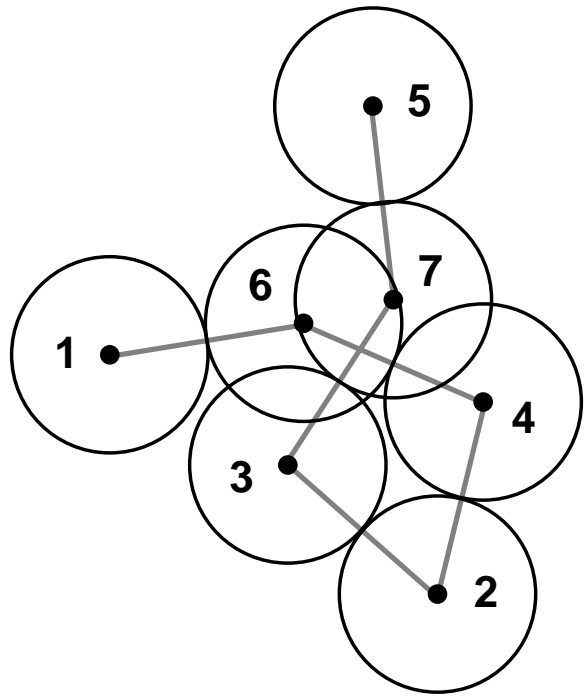}$$
\caption{A crossing worm}\label{fig:bp2}
\end{figure}

Yet another space of volume $(n{-}1)!(2\pi)^{n-1}$ is the space of
``crossing inductive trees'', one of which is illustrated in Figure~\ref{fig:bp3}.
A crossing inductive tree is a tree of $n$ touching labeled disks
with overlapping permitted, but required to satisfy the condition
that for each $k<n$, disks $1,\dots,k$ must also form a tree.  In other words,
the vertex labels increase from the root $1$. We will see that this
space is in fact a certain limiting case of the space of polymers.

\begin{figure}[htbp]
\epsfxsize220pt
$$\epsfbox{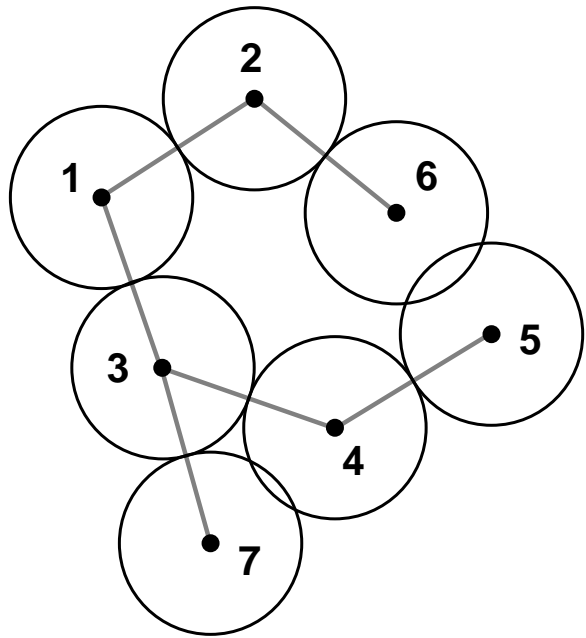}$$
\caption{A crossing inductive tree}\label{fig:bp3}
\end{figure}

\subsection{Coordinates}

In the volume calculation we will need to consider polymers made of disks
of arbitrary radius. Let $r_i\in(0,\infty)$ be the radius of the $i$th disk and
$R=\{r_1,\dots,r_n\}$ be the vector of radii.  Given a polymer $X=X(R)$,
define a graph $G(X)$ with a vertex for each disk of $X$ and an edge between
vertices whenever the corresponding disks are adjacent.  Almost surely $G(X)$
is a tree, that is, has no cycles.  When $G(X)$ is a tree, we root $G(X)$ at
the origin, and direct each edge away from the origin.  This allows us to assign
an ``absolute'' angle (taken counterclockwise relative to the $X$-axis)
to each edge.  Let $e_1,\dots,e_{n-1}$ be the edges (chosen in some
order) and $\theta_1,\dots,\theta_{n-1}$ the corresponding angles.

For a given combinatorial tree $T$, the set of polymers $X=X(R,T)$
with graph $G(X)=T$ can thus be identified with a subset of $[0,2\pi)^{n-1}$.
Call this set $\BP_R(T)$.
The boundary of $\BP_R(T)$ corresponds to polymers having at least one cycle;
the corresponding plane graphs $G(X)$ are obtained by adding one or more edges to $T$.
Indeed, the boundary of $\BP_R(T)$ is piecewise analytic and the pieces of codimension
$k$ correspond to polymers with $k$ (facial) cycles.

A polymer $X$ with cycles lies in the boundary of each $\BP_R(T)$ for which
$T$ is a spanning tree of the graph $G(X)$.  Each such $\BP_R(T)$ will contribute its
own paramaterization to $X$.  Note, however, that some of the spanning trees may be
unrealizable by unit disks (e.g.\ the star inside a 6-wheel); we just regard
them as $\BP_R(T)$ of zero volume.

We can construct a model for the parameter space of all polymers of size $n$
and disk radii $R$ by taking a copy of $\BP_R(T)$ for each possible combinatorial
type of tree, and identifying boundaries as above. Note that the identification maps
are in general analytic maps on the angles: in a polygon with $k$ vertices whose
edges have fixed lengths $r_1,\dots,r_k$, any two consecutive angles are determined
analytically by the remaining $k-3$ angles.

\subsection{Perturbations}

A polygon $P_m$ with $m$ edges is determined up to rigid motion by
$m{-}1$ consecutive edge lengths $s_1,\dots,s_{m-1}$ and the $m{-}2$ consecutive
interior angles $\phi_1,\dots,\phi_{m-2}$ with $\phi_i$ between edges $s_i$ 
and $s_{i+1}$.

The space of perturbations of the angles of an $m$-gon $P_m$ which preserve
the edge lengths is $m{-}3$-dimensional, and is generated by ``local''
perturbations which change only four consecutive angles.  Here by perturbation
we mean the derivative at $0$ of a smooth one-parameter path in the space
of $m$-gons with the same edge lengths as $P_m$.  Such a perturbation is
determined by the derivatives of the angles with respect to the parameter
$t$ along the path.  We define $\pati$ to be the infinitesimal perturbation
of the angles of $P_m$, preserving the edge lengths, for which
$\frac{\pa\phi_j}{\pa t_i}=0$ unless $j$ is one of $i-1,i,i+1,i+2$ (indices
chosen cyclically) and $\frac{\pa\phi_i}{\pa t_i}=1$. See Figure~\ref{fig:bp4}.

\begin{figure}[htbp]
\epsfxsize270pt
$$\epsfbox{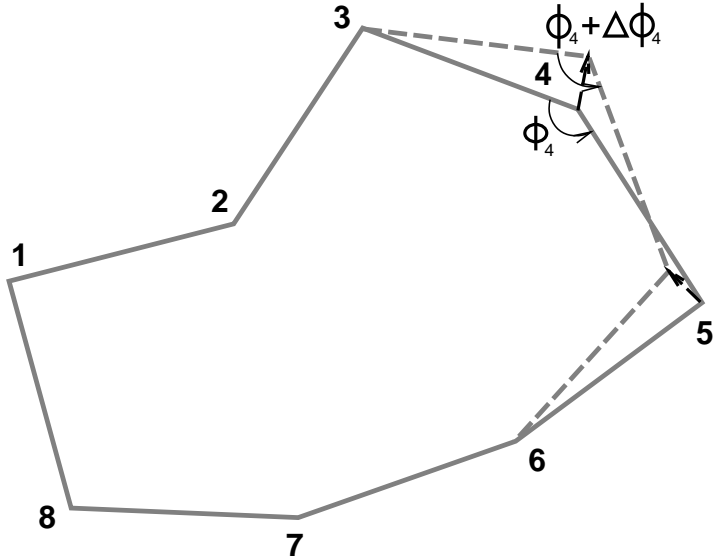}$$
\caption{Local perturbation of vertex 4 of an octagon}\label{fig:bp4}
\end{figure}

For generic $P$, the $\pati$ for $i=1,2,\dots,m-3$ generate all edge-length
preserving perturbations of $P$.  These $\pati$ are useful because they
provide a local infinitesimal coordinate charts for the boundaries of the
various sets $\BP_R(T)$ which share the same cycle $P_m$.

For example, suppose that $\BP_R(T)$ for some $T$ is parametrized by angles 
$\theta_1,\dots,\theta_{n-1}$, and we are on a part of the boundary defined by a
cycle with interior angles $\phi_1,\dots,\phi_m$ (so the $\phi$'s are
differences of the $\theta$'s). 
The infinitesimal (signed) volume of the part of the 
boundary swept out by the perturbations 
$\pati,$ for $i=1,\dots,m-3$
is
\begin{equation}\label{detform}
(d\phi_1\wedge\dots \wedge d\phi_{m-3})
\left(\frac{\pa}{\pa t_1},\dots,\frac{\pa}{\pa t_{m-3}}\right)
=\det\left(\frac{\pa\phi_i}{\pa t_j}\right)_{1\leq i,j\leq m-3}.
\end{equation}
That is to say,
if $P(t_1,\dots,t_{m-3})$ for $(t_1,\dots,t_{m-3})\in[0,\eps]^{m-3}$
is a perturbation of $P_m$, with $P(0,\dots,0)=P_m$ and with 
$\frac{\pa P(0,\dots,0)}{\pa t_i}$
having the above properties, then the signed volume swept out by all these
perturbations, divided by $\eps^{m-3}$, is given by (\ref{detform}).

We also need to consider perturbations of $P_m$ which change the edge lengths.
Let $\frac{\pa}{\pa S_i}$ be the perturbation in which two consecutive edge lengths
increase and the rest remain unchanged, and only three angles change:
\begin{eqnarray*}
\frac{\pa s_i}{\pa S_i}&\!\!=&\!\!\frac{\pa s_{i+1}}{\pa S_i}=1 \text{ and
$\frac{\pa s_j}{\pa S_i}=0$ for other $j$, and}\\ 
\frac{\pa\phi_j}{\pa S_i}&\!\!=&\!\!0 \text{ unless $j=i-1,i$ or $i+1$.}
\end{eqnarray*} This is obtained
by moving only vertex $i$.  Note that $\frac{\pa}{\pa S_i}$ for
$i=1,\dots,n$ generate all infinitesimal perturbations of the edge lengths.
With the above $\pati$, these $\frac{\pa}{\pa S_j}$ generate all motions of $P_m$.

\subsection{Volumes}

Here we determine how the volume of $\BP_R(T)$ changes when one of the radii is increased.

Let $X$ be an order-$n$ polymer in the boundary of $\BP_R(T)$. We assume that
$X$ is in a codimension-$1$ part of the boundary, that is, it has only one cycle,
$C$, with vertices $v_1,\dots,v_m$ in counterclockwise order.
Let $\phi_1,\dots,\phi_m$ be the corresponding interior angles of $C$.
Let us assume that the tree $T$ contains all edges of $C$ except the
edge between $v_{m-1}$ and $v_{m}$, so that angles $\phi_1,\dots,\phi_{m-2}$
parametrize the polymers in $\BP_R(T)$ close to $X$.

Recall that each edge of $T$ is directed away from the root.
Some of these are directed clockwise around $C$ and some counterclockwise.
Let $v_i$ be the vertex of $C$ closest to the root. 
The edges of $C$ are oriented clockwise around $C$ to the left of $v_i$ and counterclockwise
to the right of $v_i$. 
Let $\theta_{v_j}$ be the angle of the edge of $T$ whose head is at $v_j$.

\begin{lemma}\label{lemma:wedge}
\begin{equation}\label{changeform}
d\theta_{v_1}\wedge\dots\wedge\widehat{d\theta_{v_i}} \wedge\dots\wedge 
d\theta_{v_m}=
(-1)^{m-2}d\phi_1\wedge d\phi_2\wedge\dots \wedge d\phi_{m-2}\wedge d\theta_{v_m}.
\end{equation}
\end{lemma}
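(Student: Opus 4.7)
I would interpret both sides of \eqref{changeform} as $(m{-}1)$-forms in the tree-edge angles $\theta_{v_j}$ ($j\neq i$) on the cycle, and compute the determinant of the change of coordinates
\[ (\theta_{v_1},\dots,\widehat{\theta_{v_i}},\dots,\theta_{v_m}) \;\longmapsto\; (\phi_1,\dots,\phi_{m-2},\theta_{v_m}). \]

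The first step is a bookkeeping step. Introduce auxiliary directions $\alpha_1,\dots,\alpha_m$, where $\alpha_j$ is the counterclockwise direction of the cycle edge from $v_j$ to $v_{j+1}$ (cyclically, $\alpha_0=\alpha_m$). Because $v_i$ is the root-closest vertex and each tree edge of $C$ is directed away from $v_i$, the two arms of $T\cap C$ give $\alpha_j = \theta_{v_{j+1}}$ on the counterclockwise arm $(i\leq j\leq m-2)$ and $\alpha_j = \theta_{v_j}-\pi$ on the clockwise arm $(j\in\{1,\dots,i-1\}\cup\{m\})$. The index $j=i$ corresponds to the tree edge entering $v_i$ from outside $C$, which accordingly does not appear; the direction $\alpha_{m-1}$ of the omitted cycle edge $(v_{m-1},v_m)$ does not appear in any $d\phi_j$ with $j\leq m-2$.

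The second step is to substitute into the identity $\phi_j=\pi-(\alpha_j-\alpha_{j-1})$ to obtain $d\phi_j=d\alpha_{j-1}-d\alpha_j$, expressing each $d\phi_j$ as a signed difference of two LHS variables. Ordering the Jacobian with rows $(\phi_1,\dots,\phi_{m-2},\theta_{v_m})$ and columns in the natural order of the $\theta_{v_k}$'s, the last row is $(0,\dots,0,1)$; after deleting this row and the last column, I expect the remaining $(m{-}2)\times(m{-}2)$ block to be lower bidiagonal with $-1$ on the diagonal and $+1$ on the subdiagonal---the $+d\theta_{v_m}$ contribution to $d\phi_1$ being projected into the deleted last column. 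Its determinant is $(-1)^{m-2}$, which upon rearrangement gives exactly \eqref{changeform}.

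The main obstacle is the orientation bookkeeping: correctly matching each $\theta_{v_j}$ to $\alpha_{j-1}$, $\alpha_j$, or a $\pi$-shifted reverse, and handling edge cases where one arm of $T\cap C$ is short (e.g.\ $i=1$, where the clockwise arm is just the edge $(v_1,v_m)$, or $i=m-1$, where the counterclockwise arm is empty). I would sanity-check the overall sign on the smallest case $m=3$ with $i=1$, where the claim reduces to $d\theta_{v_2}\wedge d\theta_{v_3}=-\,d\phi_1\wedge d\theta_{v_3}$ and is a one-line computation.
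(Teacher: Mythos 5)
Your proposal is correct and is essentially the paper's own argument in matrix form: the paper writes the same linear relations $\phi_j=\pi-(\theta_{v_{j+1}}-\theta_{v_j})$, $\phi_k=\pi+\theta_{v_{k-1}}-\theta_{v_k}$, $\phi_i=\theta_{v_{i-1}}-\theta_{v_{i+1}}$ and performs the successive substitutions $d\theta_{v_k}\mapsto -d\phi_k$ (resp.\ $d\theta_{v_\ell}\mapsto -d\phi_{\ell-1}$), which is exactly your observation that the change-of-coordinates matrix is triangular with $-1$'s on the diagonal, yielding $(-1)^{m-2}$. Your $\alpha_j$ bookkeeping and the $m=3$ sanity check are consistent with the paper's sign conventions, so no gap.
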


\begin{proof}
We can rewrite the left-hand side in terms of the $d\phi_j$ as follows. 
To the right of $v_i$,
we have $\phi_j=\pi-(\theta_{v_{j+1}}-\theta_{v_j}).$ To the left of 
$v_i$, we have $\phi_k=\pi+\theta_{v_{k-1}}-\theta_{v_k}$. Also,
$\phi_i=\theta_{v_{i-1}}-\theta_{v_{i+1}}$. 
Replace successively $d\theta_{v_k}$ by $-d\phi_k$ for $k=1,2,...,i-1$.
Then replace successively $d\theta_{v_\ell}$ by $-d\phi_{\ell-1}$ for
$\ell=i+1,i+2,\dots,m-1$. There are precisely $m{-}2$ minus signs.
\end{proof} 

The volume form on $\BP_R(T)$ is $d\theta_{1}\wedge\dots\wedge d\theta_{n-1}$.
We can write this as $\alpha\wedge\beta$, where $\alpha$ is the form on the
left-hand side of (\ref{changeform}). The form $\beta$ involves
the edges which are not part of the cycle $C$.

If we fix the angles of a polymer $X$ in the boundary of
$\BP_R(T)$ and change the radii by a small amount,
from $R$ to $R'$, the boundary of $\BP_{R'}(T)$ moves in general away from $X$.
To compute the change in volume of $\BP_R(T)$, we integrate,
along the entire codimension-$1$ boundary of $\BP_R(T)$,
this displacement times the volume form of the boundary.
Let us consider a radius perturbation $\frac{\pa}{\pa S_i}$.
On the boundary we use the local infinitesimal coordinates defined by the
perturbations $\pati$ for $i=1,\dots,{m-3}$.
The angles $\theta$ which are not part of the cycle $C$ can be perturbed
independently; let $\frac{\pa}{\pa\theta_j}$ be a perturbation
of an angle $\theta_j$ not on $C$, which changes only this angle.
The angle $\theta_m$ can also be perturbed independently of the angles
$\phi_j$ in the cycle and other angles:
it just determines the orientation of the cycle. Let $\frac{\pa}{\pa\theta_m}$
be a perturbation of $\theta_m$.
Let $\frac{\pa}{\pa\Theta}$ be the product of the perturbations for
$\theta_m$ and the remaining angles not in $C$. 

The local volume element gained or lost by $\BP_R(T)$ is then the product
of $\beta(\frac{\pa}{\pa\Theta})$ and
$$
\omega=(d\phi_1\wedge\dots \wedge d\phi_{m-2})
\left(\frac{\pa}{\pa S_j},\frac{\pa}{\pa t_1},\dots,\frac{\pa}{\pa t_{m-3}}\right).
$$

The total volume change of $\BP_R(T)$ is the integral of $\omega$ times 
$\beta(\frac{\pa}{\pa\Theta})$ over the entire boundary of $\BP_R(T)$.
(More precisely, there is a corresponding form $\omega=\omega(C)$ for each 
codimension-$1$ piece of the boundary corresponding to the possible
cycles $C$ formed by $T$. The sum of the integrals of each form $\omega(C)
$ times $\beta(\frac{\pa}{\pa\Theta})$ over the corresponding part of the
boundary gives the total volume change.)

Now as $T$ ranges over the trees obtained by removing one edge from $C$,
the individual forms on the right-hand side of (\ref{changeform}) are obtained from 
$d\phi_1\wedge\dots\wedge d\phi_{m} $ by removing two consecutive $d\phi_i$'s,
and wedging the result with $d\theta_{v_m}$ (which, since it plays the role of
a global rotation of the cycle, is the same for each $i$).

To prove that the sum of the volumes of the $\BP_R(T)$ is constant,
it suffices now to observe the following:

\begin{lemma}\label{lemma:constant}
If $\phi_1+\dots+\phi_m$ is constant, then
$$
\sum_{i=1}^m d\phi_1\wedge\dots \wedge d\phi_{i-1}\wedge d\phi_{i+2}\wedge\dots\wedge d\phi_m=0
$$
(with cyclic indices, and where if $m$ is even we must put a $-$ sign in front of the last term
$d\phi_2\wedge\dots\wedge d\phi_{m-1}$).
\end{lemma}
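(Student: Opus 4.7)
The plan is to eliminate $d\phi_m$ using the constraint, rewrite every summand in a fixed basis of $(m-2)$-forms, and check that the resulting linear combination vanishes coefficient by coefficient.

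Because $\phi_1+\cdots+\phi_m$ is constant, I may use $d\phi_m = -(d\phi_1+\cdots+d\phi_{m-1})$, so every $(m-2)$-form in the $d\phi_i$'s lies in the $(m-1)$-dimensional space spanned by $\tilde\tau_j := d\phi_1\wedge\cdots\wedge\widehat{d\phi_j}\wedge\cdots\wedge d\phi_{m-1}$ for $j=1,\dots,m-1$. Denote the $i$-th summand of the identity by $\sigma_i$. When $1 \le i \le m-2$ the form $\sigma_i$ contains $d\phi_m$; substituting for it and noting that only the $j=i$ and $j=i+1$ pieces of $-\sum_{j=1}^{m-1}d\phi_j$ survive (the other terms repeat an already-present one-form), then sliding the new factor $m-i-2$ places back to its natural slot, a short calculation gives $\sigma_i = (-1)^{m-i-1}(\tilde\tau_i + \tilde\tau_{i+1})$. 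The two cyclic-boundary cases require no substitution: $\sigma_{m-1} = \tilde\tau_{m-1}$ and $\sigma_m = \tilde\tau_1$.

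It then remains to verify that the signed sum $\sum_{i=1}^{m-1}\sigma_i + (-1)^{m+1}\sigma_m$ vanishes coordinate by coordinate. For each interior index $1 < j < m-1$ the basis vector $\tilde\tau_j$ appears in $\sigma_{j-1}$ with coefficient $(-1)^{m-j}$ and in $\sigma_j$ with coefficient $(-1)^{m-j-1}$; these cancel in pairs. At the boundary, $\tilde\tau_{m-1}$ picks up $-1$ from $\sigma_{m-2}$ together with $+1$ from $\sigma_{m-1}$, while $\tilde\tau_1$ picks up $(-1)^{m-2}$ from $\sigma_1$ together with $(-1)^{m+1}$ from the sign-twisted $\sigma_m$, and both boundary pairs sum to zero for every parity of $m$. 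It is precisely at this last boundary that the parity-dependent $(-1)^{m+1}$ twist on $\sigma_m$ is forced.

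The only genuine obstacle is sign bookkeeping: one must track the antisymmetry sign coming from sliding $d\phi_m$ back into place consistently with the cyclic ordering and with the parity-dependent convention in the statement. Once those signs are aligned the cancellation is telescopic and entirely mechanical.
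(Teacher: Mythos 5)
Your proof is correct and takes exactly the route of the paper's one-line proof: substitute $d\phi_m=-(d\phi_1+\cdots+d\phi_{m-1})$ into each term and simplify. Your sign bookkeeping --- $\sigma_i=(-1)^{m-i-1}(\tilde\tau_i+\tilde\tau_{i+1})$ for $1\le i\le m-2$, $\sigma_{m-1}=\tilde\tau_{m-1}$, $\sigma_m=\tilde\tau_1$, with the coefficients of each $\tilde\tau_j$ cancelling against the $(-1)^{m+1}$ twist on the last term --- checks out, so you have simply supplied the details that the paper's ``and simplify'' leaves to the reader.
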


\begin{proof}
Substitute $d\phi_m=-d\phi_1-\dots-d\phi_{m-1}$ in each term and simplify.
\end{proof}

It remains to show that this constant volume in fact takes the claimed value.

\begin{theorem}\label{thm:value}
For any radius vector $R$ of length $n$, the volume of the space
of branched polymers is $(n{-}1)!(2\pi)^{n-1}$.
\end{theorem}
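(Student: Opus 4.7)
The plan is to induct on $n$, exploiting the radius invariance just established via Lemmas~\ref{lemma:wedge} and~\ref{lemma:constant}: the total polymer volume $V_n := \sum_T \mathrm{vol}(\BP_R(T))$ is independent of the positive radius vector $R$. The base case $n = 1$ is immediate, as the only configuration is disk $1$ at the origin, giving $V_1 = 1 = 0!\,(2\pi)^0$. For the inductive step I exploit invariance by evaluating $V_n$ at the convenient radii $r_1 = \cdots = r_{n-1} = 1$ and $r_n = \epsilon \to 0^+$.

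For $\epsilon < 1$, disk $n$ cannot be tangent to two unit disks simultaneously (whose centers lie at distance $\geq 2$ apart), so $n$ is necessarily a leaf of $G(X)$ in every such configuration. Consequently each polymer decomposes uniquely as (i)~a polymer $X'$ of order $n{-}1$ on the unit disks $1,\ldots,n{-}1$, (ii)~an index $i \in \{1,\ldots,n{-}1\}$ specifying the parent of $n$, and (iii)~an attachment angle $\theta_n \in A_i(X',\epsilon) \subset [0,2\pi)$, where $A_i$ is the set of angles at which a radius-$\epsilon$ disk tangent to $D_i$ avoids the interiors of the other disks of $X'$. This yields
\[
V_n \;=\; \int_{B^2(n-1)} \sum_{i=1}^{n-1}\, |A_i(X',\epsilon)|\, d\mathrm{vol}(X').
\]
As $\epsilon \to 0^+$, $|A_i(X',\epsilon)| \to 2\pi$ pointwise: the complementary set consists of those $\theta$ for which the point on $\partial D_i$ at angle $\theta$ lies in some $\mathrm{int}(D_j)$ with $j \neq i$, but the polymer condition $\mathrm{int}(D_i) \cap \mathrm{int}(D_j) = \emptyset$ forces $\partial D_i \cap \mathrm{int}(D_j) = \emptyset$ by the triangle inequality. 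Since $\sum_i |A_i| \leq 2\pi(n-1)$, dominated convergence together with the inductive hypothesis $V_{n-1} = (n-2)!\,(2\pi)^{n-2}$ gives
\[
V_n \;=\; 2\pi(n-1)\cdot V_{n-1} \;=\; 2\pi(n-1)\cdot (n-2)!\,(2\pi)^{n-2} \;=\; (n-1)!\,(2\pi)^{n-1}.
\]

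The main step requiring care is the limit $\epsilon \to 0$. One must verify that the forbidden arcs of $A_i(X',\epsilon)$ shrink uniformly to measure zero — their length is $O(\sqrt\epsilon)$ near each neighbor of $D_i$ in $X'$ and empty otherwise — and that the degenerate $X'$ with extra near-tangencies form a measure-zero set negligible in the integral. These are routine transversality checks; once handled, the induction closes, and the radius invariance promotes the value $(n-1)!(2\pi)^{n-1}$ to every positive radius vector $R$. Intuitively, the limit exhibits the polymer space as (a measurable degeneration of) the ``crossing inductive tree'' space introduced in Section~2, whose volume $\prod_{k=2}^n (k-1)\cdot 2\pi$ is visibly the claimed quantity.
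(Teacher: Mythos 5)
Your overall strategy---use the radius-invariance from Lemmas~\ref{lemma:wedge} and~\ref{lemma:constant} and then evaluate the volume at a degenerate choice of radii---is exactly the paper's strategy; the paper does it in one shot with $r_i=\epsilon^i$ (reducing directly to crossing inductive trees), while you peel off one tiny disk at a time and induct. That variant is perfectly viable, but there is a concrete error in your key structural claim.

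You assert that for $\epsilon<1$ disk $n$ ``cannot be tangent to two unit disks simultaneously'' and hence is necessarily a leaf of $G(X)$. This is false: if disks $i$ and $j$ are unit disks with centers at distance $d\in[2,\,2+2\epsilon]$, a disk of radius $\epsilon$ \emph{can} be tangent to both (its center sits at distance $1+\epsilon$ from each, and the triangle inequality $2(1+\epsilon)\ge d$ is satisfied---think of a tiny disk nestled in the crevice between two nearly touching unit disks). Worse for your argument, the trees $T$ in which $n$ is an internal vertex of degree $2$ give sets $\BP_R(T)$ of strictly positive volume for every $\epsilon>0$: the constraint that $i$ and $j$ not overlap confines the difference of the two angles at $n$ to an interval of length of order $\sqrt{\epsilon}$, not to a point. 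Consequently your identity $V_n=\int_{B^2(n-1)}\sum_i|A_i|\,d\mathrm{vol}$ is not an identity; it omits the positive contribution of all trees in which $n$ is not a leaf. The fix is the same ``lost degree of freedom'' estimate the paper uses in its own proof: each such tree contributes volume $O(\sqrt{\epsilon})\cdot(2\pi)^{n-2}$, which vanishes as $\epsilon\to0$, so the identity holds in the limit and your induction closes. As written, though, the step is justified by a false statement rather than by this (necessary) vanishing estimate. The rest of the argument---the uniform $O(\sqrt{\epsilon})$ bound on the forbidden arcs, dominated convergence, and the recursion $V_n=2\pi(n-1)V_{n-1}$---is sound.
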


\begin{proof} Choose $\ep>0$ very small and let $R$ be given
by $r_i = \ep^i$.  Let $X$ be a uniformly random configuration of disks
with these radii, forming some labeled tree $T$.  Suppose that for some $j<n$, disks $1$
through $j$ are connected.  Then we claim that with probability near 1,
disk $j{+}1$ touches one of disks 1 through $j$.  To see this, observe
that otherwise disk $j{+}1$ is connected to some previous disk $i$,
$1\le i \le j$, via a chain of (relatively) tiny disks whose indices
all exceed $j{+}1$.  Let disk $k$, $k > j{+}1$, be the one that
touches disk $j{+}1$; then the angle of the vector from the center of
disk $k$ to the center of disk $j{+}1$ is constrained to a small range,
else disk $j{+}1$ would overlap disk $i$.  It follows that $\BP_R(T)$
has lost almost an entire degree of freedom, thus has very small
volume; in other words, the tree $T$ is very unlikely.

Suppose, on the other hand, that for every $j$, disks 1 through $j$ are connected.
Then we may think of $X$ as having been built by adding touching disks
in index order, and since each is tiny compared to all previous disks,
there is almost a full range $2\pi$ of angles available to it without
danger of overlap.

It follows that as $\ep \to 0$ the volume of the space of polymers
with radius vector $R$ approaches the volume of the space of crossing
inductive trees, namely $(n{-}1)!(2\pi)^{n-1}$.  Since this volume does
not depend on $R$, we have equality.
\end{proof}

\subsection{Generalization to graphs}

Let $G$ be a graph on vertices $\{1,\dots,n\}$ whose edges are
equipped with positive real lengths $r_{ij}$.  A {\bf $G$-polymer}
is a configuration of points in the plane, also labeled by
$\{1,\dots,n\}$, such that:
\begin{enumerate}
\item point number 1 is at the origin;
\item for each edge $\{i,j\}$ of $G$, the distance $\rho(i,j)$
between points $i$ and $j$ is at least $r_{ij}$; and
\item the edges $\{i,j\}$ for which $\rho(i,j)=r_{ij}$
span $G$.
\end{enumerate}
We denote the set of $G$-polymers realizing
a given (spanning) tree $T$ by $BP_G(T)$.

Note that if $R = (r_1,\dots,r_n)$, and $G$ is the complete
graph $K_n$ with $r_{ij}=r_i+r_j$, then a $G$-polymer
is precisely the set of centers of the disks of a polymer
with radius vector $R$, in the sense of the previous sections.
The volume $V_G$ of the space of $G$-polymers is defined
as before by the angles made by the vectors from $i$ to $j$,
where $\{i,j\}$ is an edge for which $\rho(i,j)=r_{ij}$.

In fact, the proof of Lemmas~\ref{lemma:wedge}~and~\ref{lemma:constant}
extend without modification to show that $V_G$ does not depend on
the lengths $r_{ij}$ (even if they fail to satisfy the triangle
inequality), but only on the structure of $G$.  This leaves us
with the question of computing $V_G$ for a simple graph $G$.

To do this, we label the edges of $G$ arbitrarily as
$e_1,\dots,e_m$ and if $e_k=\{i,j\}$ we choose its edge-length $r_{ij}$
to be $\ep^k$ for $\ep>0$ and very small.  Then (since $\ep \le \frac12$),
for the volume of $BP_G(T)$ to be non-zero, there must not be an edge $e_k$ of
$G \setminus T$ such that $k$ is the lowest index of all edges in the cycle
made by adjoining $e_k$ to $T$.  If no such edge exists we say that $T$ is ``safe'';
and in that case, arguing as in the proof of Theorem~\ref{thm:value}, there
is almost no danger of violating condition (2) above in a random element
of $BP_G(T)$.  Thus the volume of the space of configurations in $BP_G(T)$ is nearly
the full $(2\pi)^{n-1}$.

It follows that the volume of the space of {\em all} $G$-polymers
is $\mu(G)(2\pi)^{n-1}$, where $\mu(G)$ is the number of safe spanning trees
of $G$.  Since $\mu(G)$ does not depend on the edge labeling, one might
suspect that it has a symmetric definition, and indeed it does.

\begin{lemma}\label{lemma:mu}
For any graph $G$, the number $\mu(G)$ of safe spanning trees of $G$
is equal to the absolute value of the sum over all spanning subgraphs $H$ of $G$,
of $(-1)^{|H|}$.
\end{lemma}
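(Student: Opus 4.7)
The plan is induction on the number of edges, showing that $\mu(G)$ and $S(G) := \sum_H (-1)^{|E(H)|}$ (summed over connected spanning subgraphs $H$, which I take to be the intended meaning of ``spanning subgraph,'' since summing over all spanning subgraphs vanishes as soon as $G$ has any edges) satisfy matching deletion-contraction recursions with respect to the edge $e$ of largest label, differing only by the global sign $(-1)^{|V(G)|-1}$.

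For $\mu$, the key observation is that the maximum-labeled edge $e$ is never the minimum of any cycle that contains another edge. If $e$ is a loop, the single-edge ``cycle'' $\{e\}$ forces every tree to be unsafe, so $\mu(G)=0$. If $e$ is a bridge, it lies in every spanning tree, appears in no cycle, and safety is preserved by contraction, giving $\mu(G)=\mu(G/e)$. Otherwise, partition safe trees by whether they contain $e$: for trees $T\ni e$, contracting $e$ bijects with safe trees of $G/e$, since removing $e$ from any fundamental cycle cannot change its minimum label; for trees $T\not\ni e$, the safety condition on $e$ itself is automatic (its fundamental cycle has length $\ge 2$ and $e$ has maximum label), and the remaining conditions coincide with safety in $G\setminus e$. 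Thus $\mu(G)=\mu(G\setminus e)+\mu(G/e)$. For $S$, deletion-contraction is more direct: a loop $e$ admits the sign-reversing involution $H\mapsto H\triangle\{e\}$, so $S(G)=0$; a bridge gives $S(G)=-S(G/e)$ via the bijection $H\mapsto H\setminus e$ with a sign flip; and otherwise, splitting on $e\in H$ and using the bijection $H\leftrightarrow H/e$ on the containing side yields $S(G)=S(G\setminus e)-S(G/e)$.

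Setting $f(G):=(-1)^{|V(G)|-1}\mu(G)$ and noting that $G/e$ has one fewer vertex while $G\setminus e$ has the same number, the recursions for $\mu$ translate precisely into those for $S$; the trivial base case of a single vertex with no edges gives $f=S=1$, completing the induction and proving $|S(G)|=\mu(G)$. The main obstacle is the observation in the non-bridge, non-loop case for $\mu$: one must verify that contracting the maximum-labeled edge simultaneously preserves the minima of all fundamental cycles of non-tree edges. Once this is in hand, the remaining case analysis, bijections, and sign bookkeeping are routine.
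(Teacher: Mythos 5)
Your argument is correct, but it reaches the lemma by a genuinely different route than the paper does. The paper's proof is a single inclusion--exclusion step: with the edge numbering fixed, it maps each connected spanning subgraph $H$ to the spanning tree $T(H)$ obtained by repeatedly deleting the lowest-indexed edge of each cycle; the fiber over a tree $T$ is exactly $\{T\cup S: S\subseteq B(T)\}$, where $B(T)$ is the set of ``bad'' (lowest-in-their-cycle) non-tree edges, so $T$ contributes $(-1)^{n-1}\sum_{S\subseteq B(T)}(-1)^{|S|}$, which vanishes unless $B(T)=\emptyset$, i.e.\ unless $T$ is safe. Your deletion--contraction induction on the highest-labeled edge instead shows that $(-1)^{|V(G)|-1}\mu(G)$ and $\sum_H(-1)^{|E(H)|}$ obey the same recursion with the same base case --- in effect, that both are Tutte--Grothendieck invariants. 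This is longer and obliges you to extend the definitions to the loops and parallel edges produced by contraction (which you do), but it makes explicit the connection to the evaluation ${\mathcal T}_G(0,1)$ remarked on after Theorem~\ref{thm:graph}; and the one nontrivial verification you single out --- that contracting the top-labeled edge cannot change the minimum label of any fundamental cycle, and that this edge is never itself the minimum of a cycle it lies on --- is precisely the right thing to check and does hold. Both proofs rest on reading ``spanning subgraph'' as ``connected spanning subgraph,'' and you are right that this is the intended meaning: the literal sum over all spanning subgraphs is zero as soon as $G$ has an edge, and the paper's own proof assigns a spanning tree to each $H$, which presupposes connectivity.
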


\begin{proof}
A simple inclusion-exclusion argument suffices.  Let us fix any numbering of the edges
of $G$ and, for each spanning tree $T$, let $B(T)$ be the set of ``bad'' edges of
$G \setminus T$, that is, edges which boast the lowest index of any edge in the cycle
formed with $T$.  Associate to each spanning graph $H$ the spanning tree $T(H)$ obtained
by repeatedly removing the lowest-indexed edge from each cycle.  Then for $n$ odd, a
spanning tree $T$ with set $B(T)$ of bad edges is counted once positively for each
even subset of $B(T)$ and once negatively for each odd subset; and vice-versa for
$n$ even.  It follows that in the sum (which we denote by $\mu(G)$) $T$ has a net
count of 0 unless $B(T)$ is empty, in which case it counts once positively (for $n$
odd) or negatively ($n$ even).  But the trees for which $B(T)$ is empty are exactly
the safe trees.
\end{proof}

We conclude:

\begin{theorem}\label{thm:graph}
The volume of the space of $G$-polymers in the plane is $\mu(G)(2\pi)^{n-1}$.
\end{theorem}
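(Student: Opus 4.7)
The plan is to package three ingredients already developed in this subsection and pass to the edge-length limit $\epsilon\to 0$.

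\textbf{Step 1 (Invariance).} First I would note that the proofs of Lemmas~\ref{lemma:wedge} and~\ref{lemma:constant} are local to a single cycle and use no property of the edge lengths beyond positivity; in particular they do not require the triangle inequality or any sum-of-radii form. The argument that the volume is stationary under each $\partial/\partial r_{ij}$ therefore carries over verbatim to $G$-polymers, so $V_G$ depends only on the combinatorial structure of $G$.

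\textbf{Step 2 ($\epsilon^k$-degeneration kills unsafe trees).} I would label the edges of $G$ arbitrarily as $e_1,\dots,e_m$, set $r_{e_k}=\epsilon^k$ with $\epsilon>0$ small, and decompose
\[
V_G=\sum_T \mathrm{vol}(BP_G(T))
\]
over spanning trees $T$ of $G$. If $T$ is unsafe, then some $e_k=\{i,j\}\in G\setminus T$ is the lowest-indexed edge of the cycle $C$ it forms with $T$; the other edges of $C$ have index strictly greater than $k$, so their lengths sum to at most $(n-2)\epsilon^{k+1}$. The triangle inequality around $C$ forces $\rho(i,j)\leq (n-2)\epsilon^{k+1}<\epsilon^k=r_{e_k}$, violating condition~(2). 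Hence $\mathrm{vol}(BP_G(T))=0$ once $\epsilon$ is small enough.

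\textbf{Step 3 (Safe trees contribute $(2\pi)^{n-1}$).} For safe $T$, I would mimic the proof of Theorem~\ref{thm:value}: grow the configuration vertex-by-vertex in an order compatible with the edge labeling, so that each new tree-edge is much longer than every subsequently added edge. For any non-tree edge $e_k\in G\setminus T$, safety provides a tree-edge in its fundamental cycle of index strictly less than $k$, hence of length $\gg\epsilon^k$; the constraint $\rho(i,j)\geq\epsilon^k$ is therefore active only on an angle-set of measure $O(\epsilon)$. Taking a union bound over the $m-(n-1)$ non-tree edges still leaves a full-measure region, so $\mathrm{vol}(BP_G(T))\to(2\pi)^{n-1}$.

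Summing over $T$ yields $V_G\to\mu(G)(2\pi)^{n-1}$ as $\epsilon\to 0$, and Step~1 upgrades this limit to an exact equality for every choice of radii; Lemma~\ref{lemma:mu} then supplies the claimed intrinsic formula for $\mu(G)$. I expect the main obstacle to be the quantitative union-bound in Step~3 — confirming that several non-tree edges cannot conspire to cut off a non-negligible region of angle-space — but this is a direct extension of the inductive disk-placement argument already used in Theorem~\ref{thm:value}, since each offending constraint becomes binding only in a sliver whose measure is a fixed power of $\epsilon$ smaller than the ambient $(2\pi)^{n-1}$.
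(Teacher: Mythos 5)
Your proposal is correct and follows essentially the same route as the paper: invariance of $V_G$ in the edge lengths via Lemmas~\ref{lemma:wedge} and~\ref{lemma:constant}, degeneration with $r_{e_k}=\ep^k$ so that unsafe trees contribute zero volume and safe trees contribute nearly the full $(2\pi)^{n-1}$, then summation over spanning trees. The union-bound worry you flag in Step~3 is in fact even milder than you suggest, since for a safe tree the longest (lowest-indexed) edge on the tree path joining the endpoints of a non-tree edge $e_k$ dominates the sum of all the others, so the constraint $\rho(i,j)\ge\ep^k$ holds automatically for small $\ep$.
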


Comparing with Theorem~\ref{thm:value}, we have indirectly shown that $\mu(K_n)= (-1)^{n-1}(n{-}1)!$.
In general $\mu(G) = |{\mathcal T}_G(0,1)|$ where ${\mathcal T}_G$ is the 
Tutte polynomial of $G$ (see e.g.\ \cite{B,C,T}).  We note also that $\mu(G)$ plays the
role of Brydges and Imbrie's function ``$J_C$'' in the dimension-2 case.

The computation of ${\mathcal T}_G(0,1)$, hence also of $\mu(G)$, is unfortunately
\#P-hard for general $G$ \cite{JVW}.  The point (0,1) is not, however, in the region of the
plane in which Goldberg and Jerrum \cite{GJ} have recently shown the Tutte polynomial to be hard even
to approximate.  Thus, there is some hope that a ``fully polynomial randomized approximation
scheme'' can be found for $\mu(G)$.

We conclude this section with a new solution of a notoriously difficult puzzle, which appears
as an exercise in \cite{Sp}, derived from Rayleigh's investigation (see \cite{W}) of ``random flight.''
The exercise calls for proving the corollary below by developing the Fourier analysis of spherically
symmetric functions, then deriving a certain identity involving Bessel functions.  Curiously, it is
(we believe) the only mention of {\em continuous} random walk in Spitzer's entire book.

\begin{corollary}\label{cor:walk}
Let $W$ be an $n$-step random walk in $\R^2$, each step being an independent
uniformly random unit vector.  Then the probability that $W$ ends within distance 1
of its starting point is $1/(n{+}1)$.
\end{corollary}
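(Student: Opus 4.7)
The plan is to apply Theorem~\ref{thm:graph} to the cycle graph $G = C_{n+1}$ on vertices $v_1,\dots,v_{n+1}$, with each of its $n{+}1$ edges given length $1$.  Then $V_G = \mu(C_{n+1})(2\pi)^n$, and I will compute both sides of this identity in a way that isolates the desired probability.

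For the right-hand side, Lemma~\ref{lemma:mu} gives $\mu(C_{n+1})$ immediately: the connected spanning subgraphs of $C_{n+1}$ are the whole cycle together with its $n{+}1$ spanning trees (each obtained by deleting one edge), so the signed count is $(-1)^{n+1} + (n{+}1)(-1)^n = (-1)^n n$ and thus $\mu(C_{n+1}) = n$.  Equivalently, after labelling the edges arbitrarily, the tree obtained by deleting any edge other than the lowest-indexed one is safe, giving $n$ safe spanning trees.

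For the left-hand side, I decompose $V_G$ as a sum of the volumes of $BP_G(T_k)$ over the $n{+}1$ spanning trees $T_k$ of $C_{n+1}$.  Each $T_k$ arises by removing a single cycle edge $\{v_i,v_{i+1}\}$ (cyclic indices); the remaining edges of $T_k$ must be taut, so $BP_G(T_k)$ is parametrised by $n$ free angles in $[0,2\pi)$ subject only to the inequality $\rho(v_i,v_{i+1}) \ge 1$.  Tracing through $T_k$ writes $v_{i+1}-v_i = (v_{i+1}-v_1) - (v_i-v_1)$ as a signed sum of all $n$ unit vectors $(\cos\theta,\sin\theta)$ coming from the tree edges, with the sign on each vector determined by the branch of $T_k$ on which it lies.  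Because the angles are independent and uniform on $[0,2\pi)$, sign-flipping preserves the joint distribution, so $|v_{i+1}-v_i|$ has the same law as $|W_n|$, the distance from the origin of a genuine $n$-step uniform walk.  Hence every $BP_G(T_k)$ has volume $(2\pi)^n p$ with $p := \Pr[|W_n| \ge 1]$, and summing over the $n{+}1$ trees gives $(n{+}1)(2\pi)^n p = n(2\pi)^n$.  Therefore $p = n/(n{+}1)$ and $\Pr[|W_n| \le 1] = 1 - p = 1/(n{+}1)$.

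The one subtlety to be careful with is the sign-flipping step: it is precisely what makes all $n{+}1$ spanning-tree contributions equal, and hence what produces the clean factor $n{+}1$ in the denominator.  Everything else is bookkeeping within the $G$-polymer machinery that has already been developed.
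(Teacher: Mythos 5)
Your proposal is correct and follows essentially the same route as the paper: compute $\mu(C_{n+1})=n$, identify the volume of the space of $C_{n+1}$-polymers as $n(2\pi)^n$, and observe that each of the $n{+}1$ spanning trees (each a path whose missing edge imposes the single constraint $\rho\ge 1$) contributes $(2\pi)^n\Pr[|W_n|\ge 1]$. The only cosmetic difference is that where the paper invokes the cyclic symmetry of $C_{n+1}$ to split the volume equally among the $n{+}1$ trees, you justify the equality of the contributions explicitly via the sign-flip/relabelling argument identifying each tree's constraint with the law of $|W_n|$ --- the same fact, made a bit more explicit.
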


\begin{proof}
The volume of the space of such walks, beginning from the origin, is of course
$(2\pi)^n$.  If the walk does {\em not} terminate inside the unit disk at the origin,
it is in effect a $C_{n+1}$-polymer, where $C_{n+1}$ is the $n{+}1$ cycle
in which vertex $i$ is adjacent to vertex $i{+}1$, modulo $n{+}1$.
Since $\mu(C_{n+1})=|1-(n{+}1)|=n$, the volume of the space of $C_{n+1}$-polymers
is $n(2\pi)^n$.  Since the spanning tree with no edge between nodes 1 and $n{+}1$
is one of $n{+}1$ symmetric choices, the volume of the $C_{n+1}$-polymers which
correspond to non-returning random walks is $n(2\pi)^n/(n{+}1)$, and the result follows.
\end{proof}

\subsection{The bipartite case}

One special graph of interest is the complete bipartite graph $K_{m,n}$, representing
particles of two types, each particle interacting only with particles of the other type.
Note that in the hard-core model, phase transition has been proved in this situation \cite{R}---in
dimensions 2 and higher---but not for the complete graph.

Put $\mu_{m,n} = \mu(K_{m,n})$ and let $H(x,y)$ be the exponential generating function
for $\mu_{m,n}$, given by
$$
H(x,y)=\sum_{m=1}^\infty \sum_{n=1}^\infty \mu_{m,n}\frac{x^m}{m!} \frac{y^n}{n!}~.
$$

\begin{lemma}
$$
H(x,y) = \log(e^{-x} + e^{-y} - e^{-x-y})~.
$$
\end{lemma}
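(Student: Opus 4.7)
My plan is to invoke the exponential formula for labelled bipartite species. By Lemma~\ref{lemma:mu}, $\mu_{m,n}$ equals (up to the sign $(-1)^{m+n-1}$) the signed sum $\sum_H(-1)^{|E(H)|}$ taken over connected spanning subgraphs $H$ of $K_{m,n}$. Hence $H(x,y)$, together with the contributions of single-vertex components, coincides with the bivariate exponential generating function for connected bipartite labelled graphs under the weighting $(-1)^{|E|}$.

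The decisive computation is the analogous EGF for \emph{all} (not necessarily connected) bipartite graphs with the same weighting. Any such graph on $m$ whites and $n$ blacks is simply a subset $S$ of the $mn$ possible edges of $K_{m,n}$, and the weighted sum factors as $\prod_e(1-1)=(1-1)^{mn}$. This vanishes whenever $mn\ge 1$; when $m=0$ or $n=0$ only the empty graph exists and contributes $1$. Therefore
\[
\widetilde D(x,y)\;=\;\sum_{m\ge 0}\frac{x^m}{m!}+\sum_{n\ge 0}\frac{y^n}{n!}-1\;=\;e^x+e^y-1.
\]

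The bipartite exponential formula now yields $\widetilde D=\exp\widetilde C$, where $\widetilde C$ is the EGF for the corresponding connected weighted bipartite graphs. These connected objects split into three families: isolated white singletons (contributing $x$), isolated black singletons (contributing $y$), and genuine connected components on $m,n\ge 1$ vertices of both colors (collectively contributing $H(x,y)$). Thus
\[
x+y+H(x,y)\;=\;\log(e^x+e^y-1),
\]
and the one-line rearrangement
\[
\log(e^x+e^y-1)-x-y\;=\;\log\!\bigl(e^{-x-y}(e^x+e^y-1)\bigr)\;=\;\log(e^{-x}+e^{-y}-e^{-x-y})
\]
finishes the proof.

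The only subtle point is sign bookkeeping: verifying that $(-1)^{|E|}$ factors multiplicatively across connected components (immediate, since $|E|$ is additive over disjoint components), that single-vertex connected components of either color must be included in $\widetilde C$ (producing the $x+y$ that is later subtracted), and that the sign $(-1)^{m+n-1}$ supplied by Lemma~\ref{lemma:mu} aligns correctly with the convention for $\mu_{m,n}$ appearing in the statement. With those checks carried out, the remainder is purely formal.
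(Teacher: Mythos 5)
Your proof is correct, and it takes a genuinely different route from the paper's. The paper deletes one left-side vertex to derive the differential equation $\partial H/\partial x = -1 + e^{-y-H}$ and then solves it with $H(0,y)=0$; you instead apply the exponential formula in one shot, and your key extra ingredient is the observation that the EGF of \emph{all} $(-1)^{|E|}$-weighted bipartite graphs collapses, via $\sum_{S\subseteq E(K_{m,n})}(-1)^{|S|}=0^{mn}$, to $e^x+e^y-1$, so that no differential equation ever needs to be solved. This is cleaner and makes the structure of the argument more transparent (the paper's vertex-deletion step is essentially a differentiated form of the same exponential formula). Two remarks on the point you defer to the end. First, you are right to read Lemma~\ref{lemma:mu} as a sum over \emph{connected} spanning subgraphs; taken literally over all spanning subgraphs the sum would vanish identically, and the paper's own proof (reducing $H$ to a spanning tree $T(H)$) only makes sense in the connected case. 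Second, the sign bookkeeping is not quite as innocuous as ``purely formal'': with $\mu_{m,n}$ taken literally as the nonnegative count of safe spanning trees, your identity $x+y+H=\log(e^x+e^y-1)$ would instead read $x+y-H(-x,-y)=\log(e^x+e^y-1)$, giving $H(x,y)=-\log(e^x+e^y-e^{x+y})$, which is a different function. The stated formula is the one obtained under the signed convention $\mu_{m,n}=(-1)^{m+n-1}\,(\text{number of safe spanning trees})$ --- as one can check from the coefficient of $xy$, which is $-1$ in $\log(e^{-x}+e^{-y}-e^{-x-y})$ while $K_{1,1}$ has one safe spanning tree. The paper's own proof makes the same implicit choice (its minus signs are ``contributed by edges''), so your argument matches the intended meaning, and indeed your direct computation makes this sign issue easier to audit than the ODE does.
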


\begin{proof}
Observe that
$$
\mu_{m,n} = \sum (-1)^k \mu(m_1,n_1)\mu(m_2,n_2)\dots\mu(m_k,n_k)C(m_1,\dots,m_k,n_1,\dots,n_k)
$$
where the sum is over all partitions $m=m_1+\cdots+m_k$ and $n{-}1=n_1+\cdots+n_k$,
and $C$ is a combinatorial factor---the product of two multinomial coefficients
divided by appropriate factorials when some of the pairs $(m_i,n_i)$ are equal.

We claim that $dH(x,y)/dx = -1+ e^{-y-H(x,y)}$.
This is because $dH/dx$ in effect removes one vertex
from the left ($m$-vertex) side of $G$; this leaves
singleton vertices from the right side, counted by $e^y$,
and other components, counted by $e^{H(x,y)}$.
The $-$ signs are contributed by edges connecting these components to the
removed vertex, and the $-1$ summand comes about because there must be
at least one remaining component.

Solving the differential equation with initial condition $H(0,y)=0$
yields
$$
H(x,y)=\log(e^{-x} + e^{-y} - e^{-x-y}) = -x-y+\log(e^x+e^y-1)~.
$$
\end{proof}

The argument generalizes to the complete $k$-partite case, giving
$$
H(x_1,\dots,x_k) = -\sum_{i=1}^k x_i + \log\left(1-k+\sum_{i=1}^k e^{x_k}\right)~.
$$

\subsection{Construction}\label{sec:2-constr}

We now show inductively how to construct a uniformly random
branched polymer of order $n$ in the plane.

We begin with a unit disk centered at the origin.  Suppose we have constructed
a polymer of size $n{-}1$, $n>1$.  We choose a uniformly random disk
from among the $n{-}1$ we have so far, then choose a uniformly random boundary
point on that disk and start growing a new disk tangent to that point.
If a disk of radius $1$ fits at that point, this will define a polymer of size $n$.

Otherwise there is a radius $0<r<1$ at which a cycle forms with the new disk and some
other disks present. At this point our polymer $X$ is in the boundary of the
space $\BP_R(T)$, where $R=\{1,1,\dots,1,r\}$, and we need to choose some other tree
$T'$ for which $X$ is in the boundary of $\BP_R(T')$, and which has the property that
increasing $r$ (and leaving the angles fixed) will not cause the disks to overlap.
There will be at least one possible such $T'$ because the volume of $\BP_R(T)$
is decreasing as $r$ increases and so must be compensated by an increase in volume of
some $\BP_R(T')$.  We choose randomly among the $\BP_R(T')$ with increasing volume,
with probability proportional to the infinitesimal change in the volumes of the
$\BP_R(T')$'s as $r$ increases. This ensures that the volume lost to $\BP_R(T)$
as $r$ increases is distributed among the other $\BP_R(T')$ so as to maintain
the uniform measure. (In the language of Markov chains, this is the detailed
balance condition). 

Figure \ref{fig:bp2id} shows
snapshots of the construction of a random polymer, in the process of growing
its third and fourth disks;
Figure~\ref{fig:bp500} shows a polymer of order 500 generated by this method.

\begin{figure}[htbp]
\epsfxsize420pt
$$\epsfbox{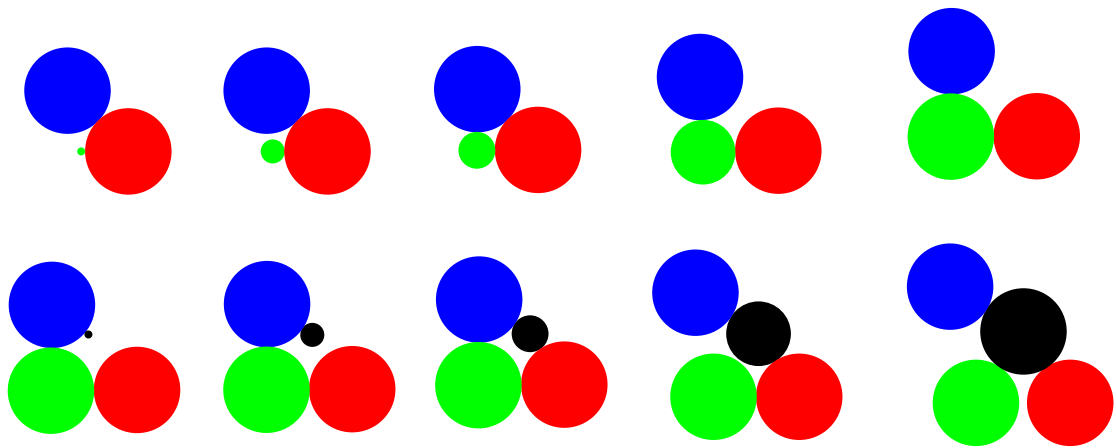}$$
\caption{A random planar branched polymer growing new disks}\label{fig:bp2id}
\end{figure}

\begin{figure}[htbp]
\epsfxsize450pt
$$\epsfbox{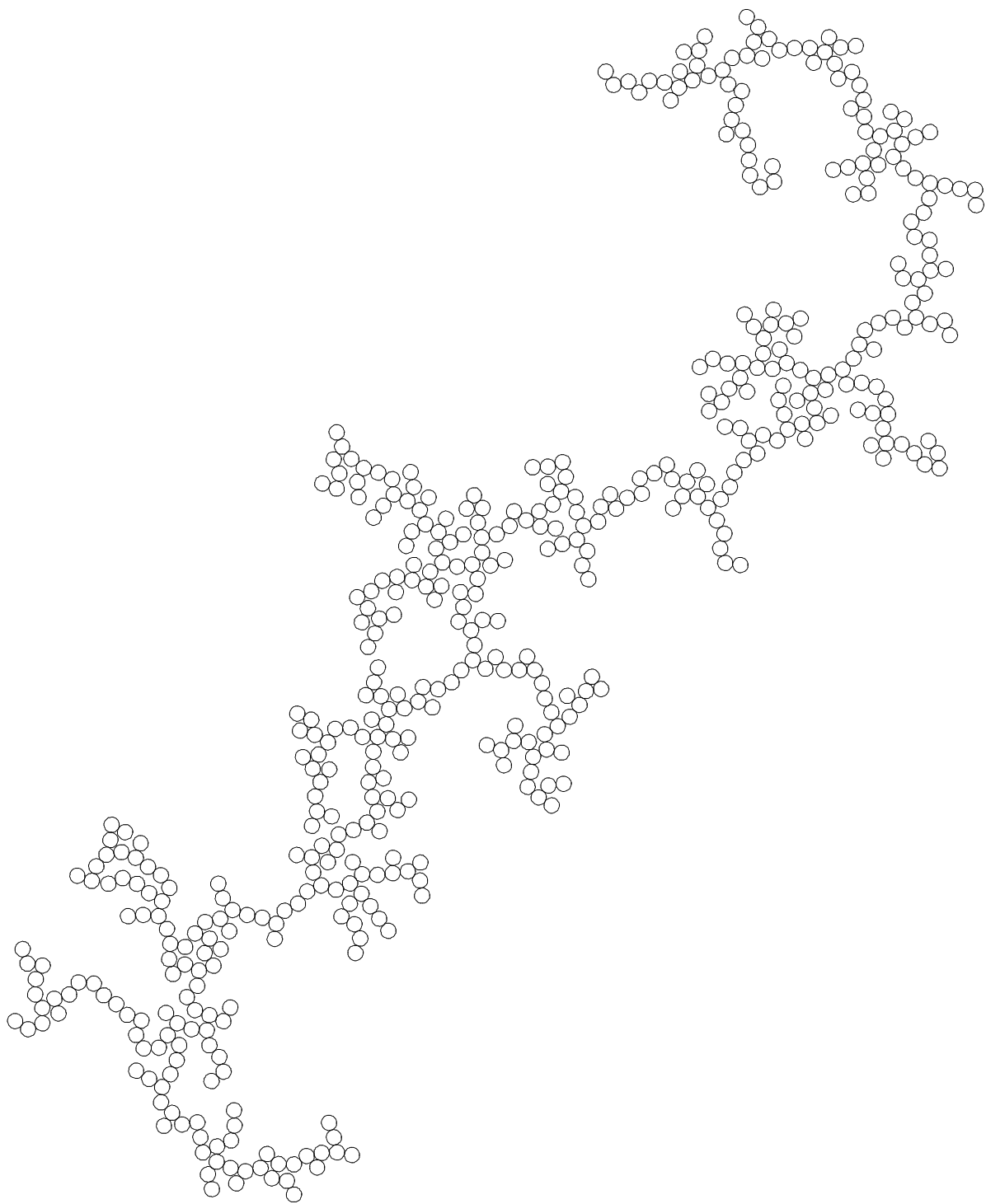}$$
\caption{A uniformly random two-dimensional branched polymer of 500 disks.}\label{fig:bp500}
\end{figure}

All of the above is easily generalized to produce uniformly random $G$-polymers
for any connected graph $G$ with specified edge-lengths (and in fact we will need
this construction later, when generating 3-dimensional polymers).  The vertices of
$G$ may be taken in any order $v_1,\dots,v_n$ having the property that the subgraph
$G_k$ induced by $v_1,\dots,v_k$ is connected for all $k$.  When a uniformly random
$G_{k-1}$-polymer has been constructed, a new point corresponding to vertex $v_k$
is added coincident to a point uniformly chosen from its neighborhood---in other
words, we start by assuming that the edges of $G_k$ incident to $v_k$ are infinitesimal
in length.  These edges are then grown to their specified sizes, breaking cycles when
they are formed in accordance with the rules above.

\section{The 3-dimensional Case}

\subsection{Volume invariance}

Branched polymers in 3-space share many of the features of planar branched polymers.
\cite{BI} showed that the volume of the configuration space of polymers in
3-space is $n^{n-1}(2\pi)^{n-1}$.  Whereas the planar configuration space volume
was independent of the radii of the balls, the same is not true in 3 dimensions.
However, there is an invariance inherited from the plane under a different
change of norm, which we now describe.

Let $G$ be a graph with $n$ vertices and edge weights $\beta_{ij}>0$.
A 3-dimensional $G$-polymer is a set of $n$ points 
$v_1,\dots,v_n\in\R^3$ such that for all $i,j$ we have
$$
\|v_i-v_j\|^2:=(v_i^1-v_j^1)^2+\beta_{ij}((v_i^2-v_j^2)^2+(v_i^3-v_j^3)^2)\geq1,
$$
with equality holding on a spanning tree of $G$.

When all $\beta_{ij}$ are $1$ this defines the standard branched polymer.
Note that if $\|v_i-v_j\|=1$ then $v_j$ is on the surface of an spheroid 
centered at $v_i$.  We measure the volume of the configuration space of
3-dimensional polymers using the normalized surface area of the
corresponding spheroids; we will see that this volume is independent of
the $\beta_{ij}$.

\subsection{One-dimensional projections}

Recall that the surface area measure of a sphere $S^2$, projected to a line
running through its center, projects to $2\pi$ times Lebesgue measure
on the image segment.  The same is true of the spheroid $\{v\in\R^3:\|v\|=1\}$
for any $\beta$, when projected to the $x$-axis, and it follows that for
purposes of computing the volume of the configuration space, we may assume
that the polymers are parametrized by the length of the projection
of each $v_i-v_j$ on the $x$-axis together with its angle to the 
positive $y$-axis
when projected onto the $yz$-plane.

Let $x_1,\dots,x_n$ be the projections of $v_1,\dots,v_n$ to the $x$-axis.
We suppose, after relabeling if necessary, that the $x_i$ are ordered
$x_1<x_2<\dots<x_n$.  If $v_i$ and $v_j$ are adjacent in the polymer then
$|x_i-x_j|\leq1$.  (See Figure~\ref{fig:proj}.)

\begin{figure}[htbp]
\epsfxsize280pt
$$\epsfbox{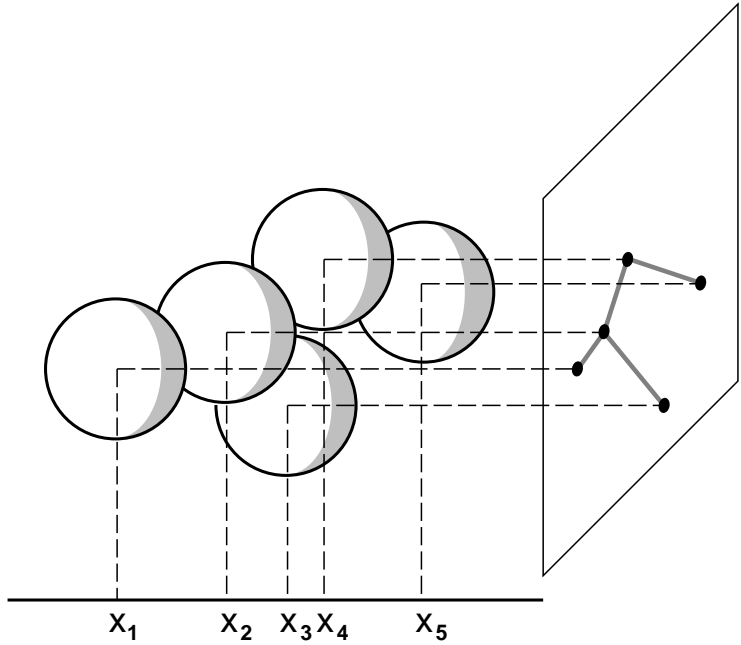}$$
\caption{A branched polymer projected onto the $x$-axis and $yz$-plane.}\label{fig:proj}
\end{figure}

\begin{lemma}\label{lemma:mult}
Fix a graph $G$ on $\{1,\dots,n\}$ with edge weights $\{\beta_{ij}\}$.
The $n{-}1$-dimensional volume of the set of $G$-polymers whose
centers project to $x_1<\cdots<x_n$ is an integer multiple of $(2\pi)^{n-1}$
and depends only on the set of pairs $i,j$ with $|x_j-x_i|>1$.
\end{lemma}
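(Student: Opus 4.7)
The plan is to condition on the $x$-projections $x_1 < \cdots < x_n$ and reduce the 3D problem to a \emph{planar} $G'$-polymer count, where $G'$ is the subgraph of $G$ consisting of those edges $\{i,j\}$ with $|x_i-x_j|\le 1$. The conclusion then falls out of Theorem~\ref{thm:graph}.

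First I would invoke the spheroid projection property recalled just before the lemma: each tight edge of the 3D polymer is parametrized by (projection to the $x$-axis, angle in the $yz$-plane) with product measure $d(x_j-x_i)\,d\theta_{ij}$. Having fixed $x_1,\ldots,x_n$, the $x$-component of every tight edge $\{i,j\}$ is determined as $x_j-x_i$, so only the $n-1$ angles $\theta_{ij}$ remain free, and it is their joint measure that computes the $(n-1)$-dimensional volume in question.

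Next I would rewrite the 3D constraint $\|v_i-v_j\|^2\ge 1$ as a planar constraint. Given $x_i,x_j$, it reads $(y_i-y_j)^2+(z_i-z_j)^2 \ge (1-(x_i-x_j)^2)/\beta_{ij}$ when $|x_i-x_j|\le 1$, and is vacuous when $|x_i-x_j|>1$, with equality exactly when the 3D edge is tight. Consequently, conditioned on the $x$-projections, the space of $yz$-coordinates is exactly the space of planar $G'$-polymers with edge lengths $r'_{ij}=\sqrt{(1-(x_i-x_j)^2)/\beta_{ij}}$, and the spanning tree of tight edges in the 3D polymer coincides with that of the planar polymer. By Theorem~\ref{thm:graph} this planar space has volume $\mu(G')(2\pi)^{n-1}$, which is automatically an integer multiple of $(2\pi)^{n-1}$; and since $\mu(G')$ depends only on $G'$ as an abstract graph---which is determined by which edges of $G$ satisfy $|x_i-x_j|>1$---the claimed invariance follows.

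The main obstacle is the first step: matching the angle measure used implicitly in the planar polymer calculation (angles of tight edges in the plane) with the restriction to fixed $x$-projections of the spheroid surface area on each tight 3D edge. Because the spheroid area projects to $2\pi$ times Lebesgue on the $x$-axis, disintegration yields exactly $d\theta_{ij}$ on each slice with no extra Jacobian, making the identification clean; but this is the place where a stray factor could creep in, so I would spell out the disintegration explicitly before invoking Theorem~\ref{thm:graph}.
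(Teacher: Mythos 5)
Your proposal is correct and follows essentially the same route as the paper's proof: both condition on the $x$-projections, observe that the spheroid constraints become planar distance constraints on the $yz$-projections for exactly those pairs with $|x_i-x_j|\le 1$, and then apply Theorem~\ref{thm:graph} to the resulting graph (your $G'$ is the paper's $K=G\cap H$) to get volume $\mu(K)(2\pi)^{n-1}$. Your explicit formula for the induced edge lengths and your attention to the disintegration of the spheroid surface measure are just more detailed renderings of steps the paper treats briefly in the preceding subsection.
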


\begin{proof}
In any such polymer, the distance between the $yz$-plane projections
of each pair $i,j$ of adjacent centers is some fixed $r_{ij}$ depending
only on $|x_i-x_j|$ and $\beta_{ij}$.  For non-adjacent centers, this
distance is at least some $r_{ij}$ provided $|x_i-x_j|\leq 1$; otherwise
it is unconstrained.

It follows that if we let $H$ be the graph on vertices $\{1,\dots,n\}$ given
by $i\sim j$ iff $|x_i-x_j|\leq1$, and define $K$ to be the intersection of
(the edges of) $G$ and $H$ (with edge-lengths supplied by $G$), then by
Theorem~\ref{thm:graph} the desired volume is $\mu(K) \cdot (2\pi)^{n-1}$.
\end{proof}

\subsection{Complete graph}
Note that $H$ is a ``unit interval graph'' (see e.g.\ \cite{Ro})
defined by overlapping unit-length intervals, in this case with their
centers at the $x_i$.  In the case of standard polymers, where
$G$ is complete, $K$ is just $H$ with appropriate edge lengths assigned.
Thus $\mu(K)=\mu(H)$ and this is easy to compute: taking
an arbitrary total order on the edges and $\beta_{e_k} = \eps^{k}$ for $\eps$ small 
(where $e_k$ is the $k$th edge), the safe spanning trees of $H$
are only those which are ``inductive'' in the sense of the Introduction:
all paths from the root are increasing.  It follows that each vertex $j>1$
has as its parent some $i<j$ for which $x_i-x_j\leq 1$, thus
$$
\mu(H) = \prod_{j=2}^n \gamma(j)
$$
where $\gamma(j)$ is the number of $i<j$ for which $x_j-x_i\leq 1$.

It will be convenient temporarily to limit discussion to the space ${\BP}'_n$
of order-$n$ polymers for which $x_1=0$, i.e.\ those whose roots extend farthest to
the left along the $x$-axis.  Define the {\em type} $\sigma(X)$ of
a polymer $X$ in ${\BP}'_n$ to be the permutation $\sigma$ of $\{2,3,\dots,n\}$
for which $x'_{\sigma(2)} < x'_{\sigma(3)} < \cdots < x'_{\sigma(n)}$,
where $x'_i := x_i \mod 1$.  Then $\mu(H)$ depends only on $\sigma$ and
we may call it $\mu(\sigma)$.  The projections $x_1,\dots,x_n$ are uniquely
determined by $\sigma$ and the arbitrary subset $\{x'_1,\dots,x'_n\}$ of
$[0,1]$, thus:

\begin{corollary}\label{cor:type}
The volume of the space of branched polymers of type $\sigma$ in ${\BP}'_n$
is precisely $\mu(\sigma) (2\pi)^{n-1}$.
\end{corollary}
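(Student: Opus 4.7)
My plan is to combine Lemma \ref{lemma:mult} with the two observations made just before the corollary: that $\mu(H)=\mu(\sigma)$ is constant on each type-$\sigma$ stratum, and that $(x_1,\dots,x_n)$ is recovered uniquely from $\sigma$ and the subset $\{x'_1,\dots,x'_n\}\subset[0,1]$. Both facts come from the same descent analysis, which I would carry out first.

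The key observation is that for any polymer in ${\BP}'_n$ to exist, $H$ must be connected, and no edge of $H$ can cross a gap $x_{i+1}-x_i>1$; hence every consecutive gap lies in $(0,1)$. Writing $x_i=\lfloor x_i\rfloor+x'_i$ with $\lfloor x_1\rfloor=0$, the constraint $0<x_{i+1}-x_i<1$ pins down
\[
\lfloor x_{i+1}\rfloor-\lfloor x_i\rfloor=
\begin{cases} 0, & x'_{i+1}>x'_i,\\ 1, & x'_{i+1}<x'_i, \end{cases}
\]
so every integer part, and hence every $x_i$, is determined by the descent pattern of $x'_1,\dots,x'_n$ (encoded by $\sigma$) together with the subset of fractional parts. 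The same bookkeeping yields $H$: writing $x_j-x_i=d(i,j)+(x'_j-x'_i)$ with $d(i,j)$ the number of descents in positions $i,i+1,\dots,j-1$, and using $|x'_j-x'_i|<1$, the edge condition $x_j-x_i\le 1$ reduces to ``$d(i,j)=0$, or $d(i,j)=1$ with $x'_j<x'_i$''; the case $d(i,j)\ge 2$ forbids an edge outright. Each of these conditions is purely combinatorial in $\sigma$, so $H$, and therefore $\mu(H)$, is constant on the type-$\sigma$ stratum; I call its common value $\mu(\sigma)$.

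The corollary is then immediate from Lemma \ref{lemma:mult}: each fixed projection of type $\sigma$ contributes an angular fibre of $(n{-}1)$-volume $\mu(\sigma)(2\pi)^{n-1}$, and the bijection of the previous paragraph identifies the remaining freedom with the subset $\{x'_1,\dots,x'_n\}\subset[0,1]$, which carries unit volume in the natural normalisation. Integrating yields the claimed $\mu(\sigma)(2\pi)^{n-1}$. I expect the connectivity/descent argument to be the main obstacle; once the integer parts are pinned down, the rest is routine bookkeeping.
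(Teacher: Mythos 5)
Your proof is correct and follows the paper's intended route: the corollary is derived from Lemma~\ref{lemma:mult} together with the two assertions made immediately before its statement (that $\mu(H)$ depends only on $\sigma$, and that the projections are recovered from $\sigma$ and the set of fractional parts), which the paper leaves unjustified, and your descent bookkeeping supplies exactly the missing verification of those assertions. The only point to watch is the normalisation of the measure on the space of fractional-part subsets (``unit volume''), but your convention is the same one the paper implicitly adopts in stating the corollary.
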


When $\sigma$ is the identity permutation $I$, all the $X_i$ are in [0,1], and
$\mu$ takes its maximum value $(n{-}1)!$. This is just the planar case in
disguise (although note that the three-dimensional volume of the polymers
of type $I$ incurs another factor of $n$ on account of the choice of $k$ for
which $x_k=0$).  On the other end of the scale, the minimum value $\mu(\sigma)=1$
is achieved in the spread-out case when $x_{i+2}-x_i>1$ for each $i=1,\dots,n{-}2$;
the number of such $\sigma$ is the ``Euler number'' $E_{n-1}$ (see e.g.\
Stanley \cite{St}).

Let $T_n$ be a uniformly random tree on the labels $\{1,\dots,n\}$,
with an independent uniformly random real length $u_{ij}$ in [0,1] assigned to
each edge $(i,j)$.  For each $j = 1,\dots, n$ let $a_j$ be the sum of the
lengths of the edges in the path from the root (vertex $1$) to $j$ in $T$; and let
$0=b_1 \le b_2 \le \cdots \le b_n$ be the $a_i$ taken in order. 
Let ${\bf B}$ be the (random) vector $\langle b_1,\dots,b_n \rangle$.

\begin{theorem}\label{thm:tree}
Let $X$ be a random branched polymer from ${\BP}'_n$, and $0=x_1 \le x_2 \le \cdots
\le x_n$ the projections of its centers onto the $x$-axis.  Then the random
vector $\langle 0, x_2, x_3, \dots, x_n \rangle$ is distributed as ${\bf B}$. 
\end{theorem}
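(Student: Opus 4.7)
The plan is to compute the Lebesgue densities of both $\langle 0, x_2, \dots, x_n\rangle$ and ${\bf B}$ on the domain $\{(y_1,\dots,y_n) : 0 = y_1 \le y_2 \le \cdots \le y_n\} \subset \R^{n-1}$, show they are both proportional to the same function, and conclude equality from the fact that both are probability densities. Write $\gamma(j) := |\{i < j : y_j - y_i \le 1\}|$, so the quantity $\mu(H(y)) = \prod_{j=2}^n \gamma(j)$ appears on both sides, where $H(y)$ is the unit-interval graph used in the derivation preceding Corollary~\ref{cor:type}.

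For the polymer side, fix sorted $y = (0, y_2, \dots, y_n)$. Among polymers in ${\BP}'_n$ with these values as the $x$-coordinates of their centers, there are $(n-1)!$ choices of bijection between vertex labels $\{2,\dots,n\}$ and sorted positions $\{2,\dots,n\}$; since $G = K_n$ is invariant under relabeling, each such bijection contributes, by Lemma~\ref{lemma:mult}, an $(n-1)$-dimensional $yz$-volume of $\mu(H(y))(2\pi)^{n-1}$. Hence the density of the sorted projection $Y := \langle 0, x_2, \dots, x_n \rangle$ at $y$ is proportional to $(n-1)!\,\mu(H(y))$.

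For the tree side, I would condition on a labeled tree $T$ rooted at vertex $1$. The linear map from edge lengths $(u_e)$ to path sums $(a_j)_{j \ne 1}$ is triangular (in a depth-first ordering) with unit Jacobian, so the conditional density of the unsorted vector $(0, a_2, \dots, a_n)$ given $T$ is the indicator of $\{a : 0 \le a_j - a_{p_T(j)} \le 1 \text{ for each non-root } j\}$, where $p_T$ denotes the parent map. Summing over bijections $\rho:\{1,\dots,n\}\to\{1,\dots,n\}$ with $\rho(1)=1$ (interpreting $\rho(j)$ as the sorted rank of $a_j$) and over trees $T$, this yields
\begin{equation*}
f_{{\bf B}}(b) = \frac{1}{n^{n-2}} \sum_{T} \sum_{\rho} \mathbf{1}\bigl[0 \le b_{\rho(j)} - b_{\rho(p_T(j))} \le 1 \text{ for all } j \ne 1\bigr].
\end{equation*}
The key simplification is a relabeling: replace the pair $(T,\rho)$ by the tree $T' := \rho\cdot T$ obtained by renaming each vertex $j$ as $\rho(j)$. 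The correspondence $(T,\rho) \mapsto T'$ is $(n-1)!$-to-one onto labeled trees on $\{1,\dots,n\}$ rooted at $1$, and the indicator becomes $\mathbf{1}[0 \le b_i - b_{p_{T'}(i)} \le 1 \text{ for all } i \ne 1]$. This forces $p_{T'}(i) < i$, so only ``increasing'' trees $T'$ contribute, and for each such $T'$ the number of valid parent choices at vertex $i$ is exactly $\gamma(i)$. Thus $f_{{\bf B}}(b) \propto (n-1)!\,\mu(H(b))$ with the same shape as $f_Y$; since both are probability densities on the same region, they coincide.

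The step I expect to be most delicate is the relabeling bookkeeping: verifying the $(n-1)!$-to-one count, that the parent relation of $T' = \rho\cdot T$ really is $p_{T'}(\rho(j)) = \rho(p_T(j))$, and that the non-negativity constraint $\rho(j) > \rho(p_T(j))$ translates exactly to the increasing-tree condition on $T'$. Everything else reduces to invoking Lemma~\ref{lemma:mult}, the product formula $\mu(H) = \prod_j \gamma(j)$, and the unit-Jacobian change of variables from edge lengths to path sums.
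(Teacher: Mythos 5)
Your proof is correct, and at its core it is the same argument as the paper's: both sides reduce to the observation that the trees compatible with a sorted projection vector $b$ are exactly the ``increasing'' trees (each non-root node's parent must be an earlier node within distance $1$), of which there are $\prod_{j}\gamma(j)=\mu(H(b))$, matching the polymer volume supplied by Lemma~\ref{lemma:mult}. The difference is one of packaging: the paper discretizes by the \emph{type} $\sigma$ (the permutation of the fractional parts $x_i \bmod 1$), shows each labeled tree hits a given type with probability $0$ or a universal constant, and invokes Corollary~\ref{cor:type}; you instead compute the Lebesgue densities of the two sorted vectors directly and match them up to a constant. Your route buys a little rigor at no extra cost --- it makes explicit the unit-Jacobian change of variables from edge lengths to path sums and the $(n-1)!$-to-one relabeling $(T,\rho)\mapsto\rho\cdot T$, and it handles automatically the step the paper leaves implicit, namely that \emph{conditional} on the type the two continuous distributions also agree (in the paper this rests on both sets of fractional parts being exchangeable uniforms). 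The delicate points you flag all check out: $p_{T'}(\rho(j))=\rho(p_T(j))$ because $\rho(1)=1$ preserves the root, the monotonicity constraint does translate to $p_{T'}(i)<i$ since the $b_i$ are a.s.\ distinct, and any parent function with $p(i)<i$ determines a valid rooted tree, so the count $\prod_i\gamma(i)$ is exact.
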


\begin{proof}
Suppose first that a tree $T_n$ is fixed and that its ${\bf B}$-vector is
of type $\sigma$.  If we allow the edge-lengths of $T_n$ to vary, we find
that to maintain type $\sigma$ the quantities $b_2 \mod 1,\dots,b_n \mod 1$,
which are independent, uniformly random drawings from [0,1], must fall in
a particular order.  Thus the probability that the edge-length assignments
to any particular combinatorial tree $T$ will yield a ${\bf B}$-vector of any
fixed type $\sigma$ is either 0 or a constant independent of $T$ and $\sigma$.

In view of Corollary~\ref{cor:type}, it suffices then to show that the number of
labeled trees $T_n$  which contribute to type $\sigma$ is $\mu(\sigma)$,
but this is easy.  Given ${\bf B}$, the node of $T_n$ corresponding to $b_j$
must have as its parent (counting node 1 as root) some node corresponding
to an $i<j$ for which $b_j-b_i \le 1$.
\end{proof}

Theorem~\ref{thm:tree} says that the $x$-axis projections of a random
$X \in {\BP}'_n$ can be obtained by planting vertex 1 of $T_n$
at $x=0$ and stretching the tree to the right, letting the rest of
its nodes mark the projections.

\begin{figure}[htbp]
\epsfxsize290pt
$$\epsfbox{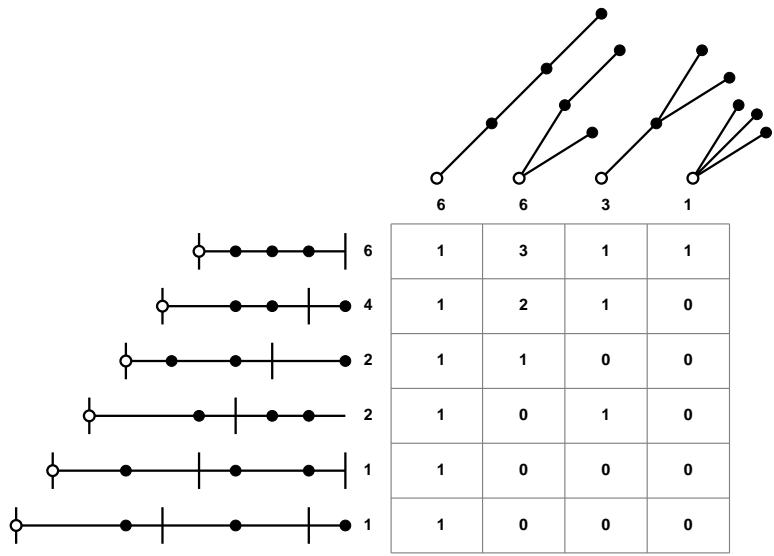}$$
\caption{The matrix of types and trees for $n=4$.}\label{fig:matrix}
\end{figure}

Figure~\ref{fig:matrix} illustrates the case $n=4$.  The rows are indexed
by types, presented as sample projections, each accompied by its
relative volume $\mu(\sigma)$.  The columns are indexed by trees,
each weighted by its number of distinct labelings (consistent with
root at 1).

Note that the theorem does {\em not} say that the tree structure of a
random 3-dimensional polymer is uniformly random; for example, no polymer
can have a node of degree greater than 12.  It does imply, however, when
combined with the proof of Theorem~\ref{lemma:mult}, that if the polymer is 
not made of spheres but of ellipsoids with widely ranging $y-z$ axes,
randomly assigned to labels, then indeed the tree structure approaches
uniformly random labeled trees.

From Theorem~\ref{thm:tree} we can incidentally deduce the not completely obvious fact that
the ``reverse'' vector $\langle 0, b_n-b_{n-1}, b_n-b_{n-2},\dots,b_n\}$
has the same distribution as ${\bf B}$.  For polymers, consequences 
of the theorem include the Brydges-Imbrie volume calculation and more:

\begin{theorem}\label{thm:3D}
The total volume of the space of 3-dimensional branched polymers of
order $n$ is $n^{n-1}(2\pi)^{n-1}$, and the expected diameter (combinatorial or
Euclidean) of a random such polymer grows as $n^{1/2}$.
\end{theorem}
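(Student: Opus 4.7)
My plan is to derive both assertions directly from Theorem~\ref{thm:tree} and Corollary~\ref{cor:type}, supplemented by classical estimates on the depth of uniformly random labeled trees.

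For the volume, I begin by integrating the angle volume provided by Lemma~\ref{lemma:mult} against the $x$-projections, so that $V(\BP'_n) = (2\pi)^{n-1} \int \mu(H(x)) \, dx$, where the integral ranges over $x$-projections of polymers in $\BP'_n$ (those with $v_1=0$ leftmost). The proof of Theorem~\ref{thm:tree} identifies the integrand $\mu(H(x))$ with the number of labeled rooted trees on $\{1,\dots,n\}$ whose root-to-vertex path-length vector matches the given sorted $x$-projections (each $j>1$ having any of the $\gamma(j)$ possible parents). Integrating against Lebesgue measure on the edge lengths and summing over the $n^{n-2}$ labeled rooted trees (Cayley's formula) gives $\int \mu(H(x)) \, dx = n^{n-2}$, hence $V(\BP'_n) = n^{n-2}(2\pi)^{n-1}$. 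Finally, by the label symmetry of the polymer definition, vertex $1$ is the leftmost in exactly a $1/n$ fraction of the full configuration space, so the total volume is $n \cdot V(\BP'_n) = n^{n-1}(2\pi)^{n-1}$.

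For the diameter estimate, Theorem~\ref{thm:tree} immediately gives that the range $b_n$ of the $x$-projections of a random polymer is distributed as the maximum root-to-vertex path length in a uniformly random labeled tree on $n$ vertices carrying iid $U[0,1]$ edge weights. Classical asymptotics on uniform random labeled trees give expected depth of order $\sqrt{n}$, and scaling by the mean edge weight yields $E[b_n] = \Theta(\sqrt{n})$. Since the Euclidean diameter dominates the $x$-spread, this yields an $\Omega(\sqrt{n})$ lower bound on the Euclidean diameter; since each tree edge has Euclidean length at most $1$, the combinatorial diameter is at least as large, so it too is $\Omega(\sqrt{n})$. The spherical symmetry of the standard ($\beta=1$) 3D polymer lets the same argument apply to the $y$- and $z$-projections, bounding the Euclidean diameter above by the sum of the three axis spreads, giving the matching $O(\sqrt{n})$ upper bound in expectation.

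The main obstacle I anticipate is the combinatorial-diameter upper bound: the polymer's combinatorial tree is not itself a uniform random labeled tree (its vertex degrees are bounded by the kissing number of unit spheres in $\R^3$), and Theorem~\ref{thm:tree} yields only the distribution of $x$-projections, not that of the combinatorial tree. I would address this either by an additional geometric argument showing the polymer tree is comparable to a random spanning tree of the dense unit-interval graph $H(B)$ arising from a typical $B$-vector with $b_n = O(\sqrt n)$, or by transferring a tree-structural bijection parallel to Theorem~\ref{thm:tree} so that the classical depth estimate applies on the polymer side directly.
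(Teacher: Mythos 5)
Your proposal follows essentially the same route as the paper: Theorem~\ref{thm:tree} together with Cayley's formula and the $1/n$ relabeling symmetry for the volume, and the R\'enyi--Szekeres $\sqrt{n}$ height estimate for random labeled trees, transferred through the $x$-axis projection and rotation invariance, for the diameter. The gap you flag concerning the upper bound on the \emph{combinatorial} diameter is genuine, but the paper's own (very terse) proof leaves it equally unaddressed, controlling only the Euclidean spread via the axis projections.
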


\begin{proof} For the volume, we apply Cayley's theorem (to the effect that
the number of labeled $n$-node trees is $n^{n-2}$) and the fact that, since
relabeling of polymers preserves volume, the volume of ${\BP}'_n$ is just
$1/n$ times the volume of the whole space.

For the diameter we make use of Szekeres' Theorem (see \cite{RS,Sz}) saying
that the expected length of the longest path in a random tree on $n$ labels
is of order $\sqrt{n}$.  The expected length of the longest path from the
root in our edge-weighted tree $T_n$ must therefore also be of order $\sqrt{n}$,
and this is exactly the length of the projection of our random polymer on
the $x$-axis.  Since the space of polymers is independent of choice of
axes, the spatial diameter of a random polymer must also be of order $\sqrt{n}$.
\end{proof}

\subsection{Construction}

To construct a uniformly random three-dimensional branched polymer of order $n$,
we first select a uniformly random labeled tree $T$, then a set $\{x_1,\dots,x_n\}$
of projected centers on the $x$-axis.  We then build the $yz$-plane projection using
our 2-dimensional polymer construction; this yields the locations of the $n$
centers in 3-space, and it remains only to pick a root and translate it to the origin.

The tree $T$ on vertices $\{1,\dots,n\}$ can be selected from the $n^{n-2}$
possibilities by means of a Pr\"{u}fer code (see, e.g., \cite{LW}), which is itself
just a sequence of $n{-}2$ numbers between 1 and $n$.  The first entry of the code is the 
label of the vertex adjacent to the least-labeled leaf of $T$; that leaf is then deleted
and succeeding entries defined similarly.  The reverse process is also unique and easy.

The projections are defined by assigning independent uniformly random reals $u \in [0,1]$
to each edge of $T$, then letting $x_i$ be the length of the path from vertex $i$ to
vertex 1.  The unit-interval graph $H$ is defined as above on the tree-vertices,
namely by $i \sim j$ if $|x_j - x_i| \le 1$.  Edge-lengths are assigned to $H$ by
$\ell(i,j) = \sqrt{1-(x_j - x_i)^2}$ so that the spheres of the polymer corresponding to
tree vertices $i$ and $j$ are touching just when their centers lie at distance $\ell(i,j)$
when projected onto the $yz$-plane, and in any case lie at least that far apart.

From the argument above we know that given $x_1,\dots,x_n$, the $yz$-plane projections 
are exactly a uniformly random planar $H$-polymer, which we then select using the
methods of Section~\ref{sec:2-constr}.

Combining the $x$-axis and $yz$-plane projections gives us the centers of a
uniformly random branched polymer in 3-space (with spheres of diameter 1),
except that sphere number 1 is forced to have its center on the
$yz$-plane; we now choose a sphere uniformly at random to be the new root,
and translate the polymer so that this sphere's center is at the origin.

Figures \ref{fig:bp3d2}, \ref{fig:bp3d5} and \ref{fig:bp3d8} are snapshots, from
three angles, of a 3-dimensional branched polymer constructed as above.

\begin{figure}[htbp]
\epsfxsize460pt
$$\epsfbox{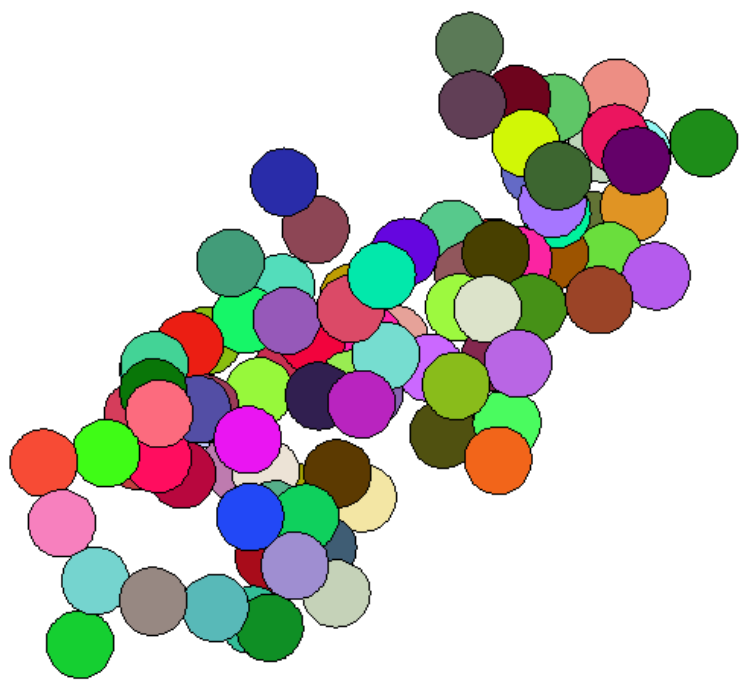}$$
\caption{A random branched polymer in 3-space}\label{fig:bp3d2}
\end{figure}

\begin{figure}[htbp]
\epsfxsize460pt
$$\epsfbox{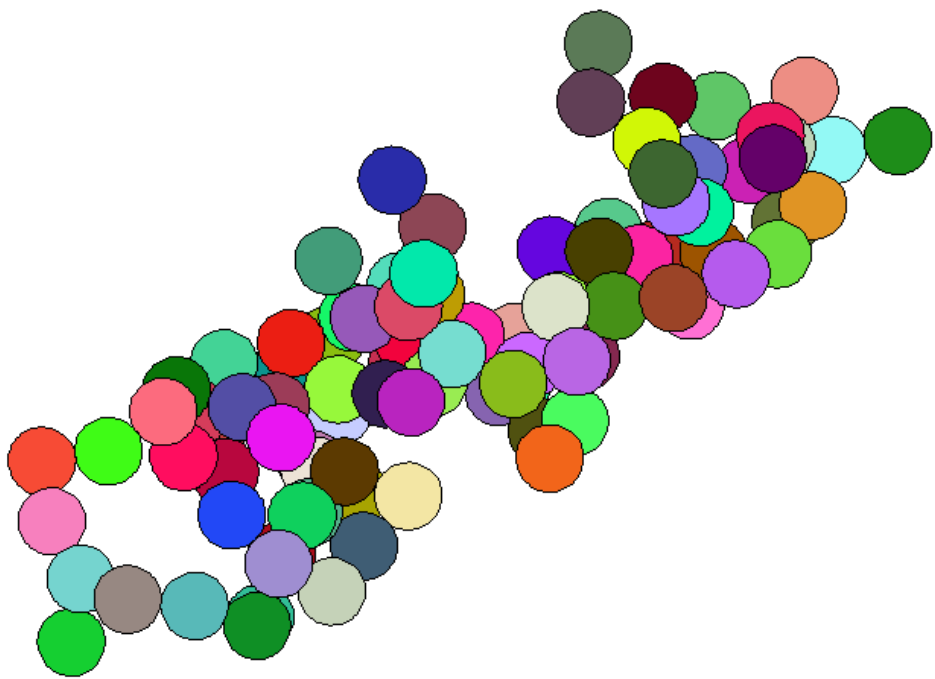}$$
\caption{The same polymer, slightly rotated}\label{fig:bp3d5}
\end{figure}

\old{\begin{figure}[htbp]
\epsfxsize460pt
$$\epsfbox{bp3d8.ps}$$
\caption{The same polymer, rotated a bit more}\label{fig:bp3d8}
\end{figure}
}

\section{Open problems}
\begin{enumerate}
\item
Is there a geometric interpretation of the local volume changes of the 
$\BP_R(T)$--which clearly depends on the shape of the cycle $C$?
This would lead to a possible natural geometrization of the space of polymers.
\item
What are the volumes of $\BP_R(T)$ for each $T$?
\item
What is the expected diameter (combinatorial or geometric) of a random
two-dimensional branched polymer?
\item
More generally, what do random polymers look like in the scaling limit,
in any fixed dimension?
\end{enumerate}

§

\end{document}